\documentclass[10pt]{amsart}
\usepackage{a4}
\usepackage{amssymb}
\usepackage{amsmath}
\usepackage{amsfonts}
\usepackage{mathtools}
\usepackage{fancybox}
\usepackage{empheq}
\usepackage[stable]{footmisc}
\usepackage{calrsfs}
\usepackage{hyperref}
\usepackage{changes}
\usepackage{framed}
\usepackage{parskip}
\usepackage{color, colortbl}
\definecolor{LightCyan}{rgb}{0.88,1,1}
\definecolor{Gray}{gray}{0.9}
\usepackage{soul}

\usepackage{color}
\usepackage{comment}
\usepackage[all,cmtip]{xy}

\usepackage{cleveref}


%


%

\newtheorem{theorem}{Theorem}
\newtheorem{lemma}[theorem]{Lemma}

\newtheorem{proposition}[theorem]{Proposition}

\newtheorem{remark}[theorem]{Remark}

\newcommand{\Z}{\mathbb{Z}}

\newcommand{\Gal}{\mathrm{Gal}}

\title{A generalization of the Hasse-Arf theorem}

\date{\today}

\author[I. Tsouknidas]{Ioannis Tsouknidas}
\address{Beijing Institute of Mathematical Sciences and Applications (BIMSA), Huairou District, Beijing, 101408, China\\
}
\email{iotsouknidas@bimsa.cn}

\date \today

\makeatletter
\newcommand{\aprod}{\mathop{\operator@font \hbox{\Large$\ast$}}}
\makeatother

\begin{document}
\begin{abstract}
We use the theory of Harbater-Katz-Gabber curves to derive a generalization of the Hasse-Arf theorem for complete local field extensions in positive characteristic.
\end{abstract}

\maketitle
\section{Introduction}
Let $k$ be a field of positive characteristic $p\geq 5$ which is algebraically closed. Our aim is to establish a generalization of the Hasse-Arf theorem for wildly ramified extensions of the complete local field $k\left((x)\right)$. Namely we will prove an equivalence condition for the upper jumps of the ramification filtration to be integers. To that end we exploit the machinery of Harbater-Katz-Gabber curves, which
 are the complete analogues of local fields in number theory. A Harbater-Katz-Gabber curve can be seen as a tower of generalized Artin-Schreier extensions:
 \[
  F=F_h=F_{h-1}(\bar{f}_{h-1})>F_{h-1}>\dots>F_{2}=F_1(\bar{f}_1)>k((x)).
\]
For the elements $\bar{f}_1,\ldots,\bar{f}_{h-1}$ we have that $\bar{f}_i\in F_{i+1}$ and its minimal polynomial over $F_i$ is of the form:
\[
	P_i(X)-D_i
\]
for some $D_i\in F_i.$ In our generalization, we impose a condition on each $D_i$ for $i=2,\ldots,h-1$ which we explain after introducing the Hasse-Arf theorem.

For a complete discrete valuation ring $A_K$ with residue field $k$ and fraction field $K$ with equal characteristic $p$, there is a uniformizing parameter $x$ such that $A_K\simeq k\left[[x]\right]$ and $K=k\left((x)\right)$. For a finite Galois extension $F/K$, set $A_F$ for the integral closure of $A_K$ in $F$ and denote the maximal ideal of $A_F$ by $p_F.$ The Galois group $\Gal{(F/K)}$ contains the subgroups $G_i$ comprising $\sigma\in \Gal{(F/K)}$ which operate trivially on $A_F/p_F^{i+1}$. These subgroups establish the following group filtration:
\[
	\Gal{(F/K)}=G_{-1}\geq G_0\geq G_1\geq\ldots\geq {1}
\]
which stabilizes after finite terms. It is called the \emph{ramification filtration} of $\Gal{(F/K)}$. An integer $i$ such that $G_i>G_{i+1}$ is called a \emph{jump of the ramification filtration.} Denote the jumps which are greater than or equal to $1$ by $b_1,\ldots,b_{h-1}.$ Then:
\[
	\Gal{(F/K)}=G_{-1}\geq G_0\geq G_1=G_{b_1}>G_{b_2}>\ldots>G_{b_{h-1}}> {1}
\]
Now consider a subgroup $H$ of $\Gal{(F/K)}$. One wants to know the ramification filtration of $H$ and of $\Gal{(F/K)}/H$, if $H$ is normal. In the former case, it holds that $H_i=G_i\cap H$, therefore the ramification filtration of $\Gal{(F/K)}$ completely determines the ramification filtration of $H.$ In the latter case a similar result holds but one needs to modify the numbering:

Set $\phi:[-1,\infty)\to [-1,\infty)$ by
\[
	\phi(u)=\int_{0}^u\frac{dt}{[G_0:G_t]}\text{ for }u\geq 0
\]
and $\phi(u)=u$ for $u\in [-1,0].$

Define $G^{\phi(i)}:=G_i$. Then the ramification filtration of $\Gal{(F/K)}$ \emph{in upper numbering} is:
\[
	G^{-1}=G\geq G^0=G_0= G^{\phi(b_1)}>\ldots>G^{\phi(b_{h-1})}>1.
\]
The upper numbering allows for the calculation of the ramification groups of $\Gal{(F/K)}/H$ due to the formula:
\[
	\left(\frac{\Gal{(F/K)}}{H}\right)^i=\frac{G^iH}{H}.
\]
The Hasse-Arf theorem states that if $\Gal{(F/K)}$ is abelian then the jumps of its upper ramification filtration are integers. In other words, if $\Gal{(F/K)}$ is abelian then $\phi(b_i)\in \Z$ for all $i=1,\ldots,h-1.$

 Our main result is the following:

 \textbf{Thm. \ref{hassearfgeneralization}.} The jumps of the upper ramification filtration of $\Gal{(F/K)}$ are integers if and only if for $i=2,\ldots,h-1$ the following holds:
  $$\min D_i=f(x)\bar{f}_{i-1},$$
  where $f(x)\in k[x]$ and the minimum is taken with respect to the valuation at the extension in $F$ of the totally ramified point.

It is a work in progress to use our result in order to characterize the group $\Gal{(F/K)}$, which would be a semidirect product but not necessarily abelian.

\subsection*{Acknowledgment} I thank Aristides Kontogeorgis for helpful conversations and for reading the initial version of this paper. Also I thank Beihui Yuan for her help in computing explicit equations for Artin-Schreier-Witt extensions using Macauley 2.

\section{Background material}

A Harbater-Katz-Gabber curve (HKG for short) is a cover $\pi:X\to \mathbb{P}^1$ which has at most one tamely ramified point and one totally and wildly ramified point of $\mathbb{P}^1.$ Without loss of generality we assume that the wildly ramified point is $P_{\infty}$. Let $P\in X$ such that $\pi(P)=P_\infty$. This induces an extension of residue fields and under a suitable choice of uniformizer, the extension is $F/k((x))$. The valuation of $k((x))$ at $P_\infty$ is $v_{\infty}$, so that $v_\infty(x)=-1.$ The valuation $v_\infty$ extends to a valuation of $F$ which we denote by $v.$ Assume that $F/k((x))$ is finite and Galois. Pick a uniformizer $x'$ of $P$. Then the ramification groups of $\Gal{(F/k((x)))}$ are equivalently defined by:
\[
	G_i:=\{\sigma\in \Gal{(F/k((x)))}:v(\sigma x'-x')>i\}
\]
for $i\geq -1$. Since $v$ is discrete, the indices $i$ are integers. The groups $G_{-1}$, $G_0$ are called the decomposition and inertia ramification groups of $P$ over $P_\infty.$ Since the extension is Galois and $P_{\infty}$ ramifies totally, the decomposition group coincides with the elements of $\Gal{(F/k((x)))}$. Additionally since $k$ is algebraically closed, the decomposition group coincides with the inertia subgroup. Therefore $\Gal{(F/k((x)))}=G_{-1}=G_0$ and that group is isomorphic to the semidirect product $G_1\rtimes (G_0/G_1)$. The quotient $G_0/G_1$ is cyclic with prime-to-$p$ order and it corresponds to a Kummer extension of $k((x))$. The group $G_1$ is a $p$-group and the consecutive quotients $G_i/G_{i+1}$ for $i\geq 1$ are elementary abelian groups, that is, isomorphic to finite direct sums of $\Z/p\Z.$

By \cite[cor. 2, pg. 70]{SeL} we will assume that $\Gal{(F/k((x)))}=G_1$, thus discarding tame ramification. In that case the inertia subgroup of the ramification filtration coincides with the first ramification group. A good reference for properties of algebraic function field extensions can be found at \cite{Stichtenothv2009}.

Denote by $b_1,\ldots,b_{h-1}$ the jumps of the lower ramification filtration of $\Gal{(F/k\left((x)\right))}$, hence:
\[
	1\lneq G_{b_{h-1}}\lneq \ldots\lneq G_{b_1}=\Gal{(F/k\left((x)\right))}.
\]
Write $[G_{b_i}:G_{b_{i+1}}]=p^{n_i}$ for $i=1,\ldots, h-1$, where $G_{b_{h}}=1.$ The following results are due to the work of Karanikolopoulos and Kontogeorgis in \cite{Karanikolopoulos2013}. 

Setting $F_i=F^{G_{b_i}}$ yields the following extensions:
\[
  F=F_h>F_{h-1}>\dots>F_{2}>F_1=k((x)),
\]
and for each $i=2,\ldots,h-1$ there exists $\bar{f}_{i}\in F_{i+1}$ such that $F_{i+1}=F_i(\bar{f}_i).$  By \cite{1901.08446},
the field generators, $\bar{f}_i$, have minimal polynomials of the form
\begin{equation}
\label{defeqi}
X^{p^{n_i}}+a^{(i)}_{n_i-1}X^{p^{n_i-1}}+\dots+a^{(i)}_{0} X- D_i.
\end{equation}
where all the coefficients $a^{(i)}_{n_i-j}$, $j=1,\dots,n_1$ are in $k$ and $D_i\in F_i$.

 Attached to the extension is its Weierstrass semigroup, $H(P)$, which is the set of nonnegative integers $i$ such that $v(f)=-iP$ for some nonzero $f\in F.$ These integers are called \emph{pole numbers} of the extension $F/k(x)$ at $P.$ The valuation of each $\bar{f}_i$ is $v(\bar{f}_i)=-\bar{m}_i$ and by the construction of Karanikolopoulos and Kontogeorgis, the positive integers $\bar{m}_i$ satisfy the following conditions:
\begin{itemize}
\item $\bar{m}_i<\bar{m}_{i+1}$,
\item $\bar{m}_i=p^{n_{i+1}+\ldots+n_{h-1}}b_i=|G_{b_{i+1}}|b_i$,
\item $\bar{m}_i$ is a minimal generator of the Weierstrass semigroup $H(P)$,
\end{itemize}
for each $i=1,\ldots,h-1.$
 The Weierstrass semigroup is generated by the elements
$\{|G_0|, \bar{m}_1,\dots, \bar{m}_{h-1}\}$. The last subset of the Weierstrass semigroup might not be the minimal set of generators, since this depends on whether $G_1(P)$ equals $G_2(P)$, see \cite[thm. 13]{Karanikolopoulos2013}. The generators $\bar{m}_i$ give rise to a flag of vector spaces:
 \[
 k\subsetneq L(\bar{m}_1P)\subsetneq\ldots\subsetneq L(\bar{m}_{h-1}P)                           
 \]
 where $L(mP)=\{f\in F:(f)+mP\geq 0\}\cup {0}$ is the Riemann-Roch space associated to the divisor $mP.$ 

Consider a monomial
\[
	f:=x^{\ell_0}\bar{f}_1^{\ell_1}\ldots \bar{f}_{h-1}^{\ell_{h-1}}
\]
in $L(mP)$. Then we can assume that $\ell_i$, $i=1,\ldots,h-1$ is positive, otherwise $f$ would have a pole other than $P.$ Additionally we can assume that $\ell_i<p^{n_i}$, since we can use the minimal polynomial of $\bar{f}_i$ to substitute $\bar{f}_i^{p^{n_i}}$ accordingly.

If $g:=x^{\lambda_0}\bar{f}_1^{\lambda_1}\ldots \bar{f}_{h-1}^{\lambda_{h-1}}$ is another monomial in $L(mP)$, then \cite[lem. 2]{MR4194180} guarantees that their valuations differ. Therefore we can prove the following lemma:

\begin{lemma}\label{gradedring}
Assume that 
\[	
A
=
\sum_
{ \stackrel{\ell_0\in \mathbb{Z}}{ 0\leq\ell_j<p^{n_j}\text{ for }j=1,\ldots,i-1}}
 \gamma_{\ell_0,\dots,\ell_{i-1}}^{(i)}
 x^{\ell_0}
 \bar{f}_1^{\ell_1}
 \cdots
  \bar{f}_{i-1}^{\ell_{i-1}}
\]
is in $L(mP)$, for some $m>0$, where $\gamma_{\ell_0,\dots,\ell_{i-1}}^{(i)}\in k$. Then each monomial summand $ \gamma_{\ell_0,\dots,\ell_{i-1}}^{(i)}x^{\ell_0}\bar{f}_1^{\ell_1}\dots \bar{f}_{i-1}^{\ell_{i-1}}$ is in $L(mP).$
\end{lemma}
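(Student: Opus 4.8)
The plan is to reduce the statement to a single inequality between valuations at $P$ and then to exploit that distinct monomials have distinct valuations, the fact recorded in the paragraph preceding the lemma. First I would discard the vanishing coefficients: let $S$ be the finite set of tuples $(\ell_0,\dots,\ell_{i-1})$ with $\gamma^{(i)}_{\ell_0,\dots,\ell_{i-1}}\neq 0$, and abbreviate the corresponding monomials by $g_{\vec\ell}:=x^{\ell_0}\bar{f}_1^{\ell_1}\cdots\bar{f}_{i-1}^{\ell_{i-1}}$, so that $A=\sum_{\vec\ell\in S}\gamma^{(i)}_{\vec\ell}\,g_{\vec\ell}$. Because the exponents satisfy $0\le\ell_j<p^{n_j}$ for $j=1,\dots,i-1$ and the generators $x,\bar{f}_1,\dots,\bar{f}_{i-1}$ carry their only pole at $P$ — exactly as in the monomials discussed before the statement — each $g_{\vec\ell}$ is regular on $X\setminus\{P\}$; hence $g_{\vec\ell}\in L(mP)$ will follow as soon as $v(g_{\vec\ell})\ge -m$ is established.

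The core of the argument is a non-archimedean computation of $v(A)$. By \cite[lem.~2]{MR4194180} the valuations $v(g_{\vec\ell})$, $\vec\ell\in S$, are pairwise distinct, and multiplication by the nonzero scalar $\gamma^{(i)}_{\vec\ell}\in k$ leaves a valuation unchanged, so the summands $\gamma^{(i)}_{\vec\ell}g_{\vec\ell}$ have pairwise distinct valuations as well. The strict form of the ultrametric inequality for the discrete valuation $v$ then yields
\[
 v(A)=\min_{\vec\ell\in S}v\!\left(\gamma^{(i)}_{\vec\ell}g_{\vec\ell}\right)=\min_{\vec\ell\in S}v(g_{\vec\ell}).
\]
Since $A\in L(mP)$ we have $v(A)\ge -m$, whence $v(g_{\vec\ell})\ge v(A)\ge -m$ for every $\vec\ell\in S$. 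Combined with the regularity of $g_{\vec\ell}$ away from $P$ noted above, this places each summand $\gamma^{(i)}_{\vec\ell}g_{\vec\ell}$ in $L(mP)$, which is the assertion.

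The step I expect to carry the whole weight is the equality $v(A)=\min_{\vec\ell}v(g_{\vec\ell})$: it holds precisely because the valuations are genuinely distinct, so the proof hinges on a correct appeal to \cite[lem.~2]{MR4194180} for exactly the monomials occurring in $A$, with the bounds $0\le\ell_j<p^{n_j}$ guaranteeing that no two tuples produce the same valuation. The remaining translation between the pole-order description of $L(mP)$ and the value of $v$, together with the observation that the summands have no poles off $P$, is routine by comparison.
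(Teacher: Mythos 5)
Your proof is correct and takes essentially the same route as the paper: both invoke \cite[lem.~2]{MR4194180} to see that the monomial summands have pairwise distinct valuations, apply the strict triangle (ultrametric) inequality to obtain $v(A)=\min_{\vec\ell}v(g_{\vec\ell})$, and conclude $v(g_{\vec\ell})\ge v(A)\ge -m$ for every summand. Your explicit check that each monomial is regular away from $P$ is a small elaboration the paper leaves implicit; just note that the statement as written allows $\ell_0\in\mathbb{Z}$, so that remark tacitly assumes $\ell_0\ge 0$, which is the only case that actually occurs by the paper's remark following Proposition~\ref{module}.
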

\begin{proof}
Every two monomial summands have different valuations, therefore there will be a minimal one, call it $\min A$, so by the strict triangle inequality we get $v(A)=v(\min A)$. On the other hand $v(A)\geq -m$ therefore for each monomial summand we have:
\[
	v(\gamma_{\ell_0,\dots,\ell_{i-1}}^{(i)}x^{\ell_0}\dots \bar{f}_{i-1}^{\ell_{i-1}})\geq v(\min A)\geq -m,
\]
as proclaimed.
\end{proof}

The following proposition appeared in \cite{1901.08446}:

 \begin{proposition}\label{module}
 For $m\in H(P)$ the following holds:
 \begin{equation}
\label{W-def}
L\big(mP \big)
=
\left\langle
x^{a_0}\bar{f}_1^{a_1}\cdots \bar{f}_s^{a_s}:
\begin{array}{l} 
0\leq a_0,\,0 \leq a_i < p^{n_i} \text{ for } 1\leq i \leq s,
\\
\text{ and }
v(x^{a_0}\bar{f}_1^{a_1}\cdots \bar{f}_s^{a_s})
\geq
 -m
\end{array}
\right\rangle_k.
\end{equation} 
The integer $s$ is the greatest index 
of $\bar{m}_i$ such that $\bar{m}_i\leq m$.
 \end{proposition}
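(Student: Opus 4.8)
The plan is to establish the two inclusions separately. The inclusion $\supseteq$ is immediate: the coordinate $x$ and each generator $\bar f_i$ are regular on $X\setminus\{P\}$ by construction, so every monomial $x^{a_0}\bar f_1^{a_1}\cdots\bar f_s^{a_s}$ has its only possible pole at $P$, and the imposed inequality $v(x^{a_0}\bar f_1^{a_1}\cdots\bar f_s^{a_s})\geq -m$ says precisely that this pole has order at most $m$. Hence each listed monomial lies in $L(mP)$, and so does their $k$-span.

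For the reverse inclusion I would take $f\in L(mP)$ and first bring it into a reduced normal form as a finite $k$-linear combination of monomials $x^{\ell_0}\bar f_1^{\ell_1}\cdots\bar f_{h-1}^{\ell_{h-1}}$ with $\ell_0\in\Z$ and $0\leq\ell_i<p^{n_i}$. Starting from the expansion of $f$ in the $k(x)$-basis $\{\bar f_1^{\ell_1}\cdots\bar f_{h-1}^{\ell_{h-1}}:0\leq\ell_i<p^{n_i}\}$ of $F$, I would repeatedly invoke the defining equations \eqref{defeqi} to rewrite any $\bar f_i^{p^{n_i}}$ in terms of lower powers of $\bar f_i$ and of $D_i\in F_i$, which by the construction is a polynomial in $x,\bar f_1,\dots,\bar f_{i-1}$; this terminates and yields the claimed finite combination with constant coefficients. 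With $f$ in this form the crucial move is to apply Lemma~\ref{gradedring} with $i=h$: since distinct monomials have distinct valuations, each summand $\gamma_{\vec\ell}\,x^{\ell_0}\bar f_1^{\ell_1}\cdots\bar f_{h-1}^{\ell_{h-1}}$ individually belongs to $L(mP)$.

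It then suffices to examine one such monomial lying in $L(mP)$. By additivity of $v$ its valuation is $-\ell_0|G_0|-\sum_i\ell_i\bar m_i$. If $\ell_0<0$ the monomial would acquire a pole above $x=0$ and could not lie in $L(mP)$, so $\ell_0\geq 0$. Moreover, since $\bar m_i>m$ for every $i>s$, any monomial with $\ell_i\neq 0$ for some such $i$ would have valuation $\leq-\bar m_i<-m$, again contradicting membership in $L(mP)$; hence $\ell_i=0$ for all $i>s$. Every surviving monomial therefore has exactly the shape appearing on the right-hand side, which establishes $\subseteq$ and finishes the proof.

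I expect the genuine obstacle to be the normal-form reduction rather than the valuation bookkeeping: one must check that lowering the $\bar f_i$-degrees keeps the coefficients $D_i$ polynomial in $x,\bar f_1,\dots,\bar f_{i-1}$---equivalently, that each $\bar f_i$ is integral over $k[x]$ and regular away from $P$---so that $f$ really admits a finite expansion with constant coefficients to which Lemma~\ref{gradedring} applies. This integrality I would extract from the Karanikolopoulos--Kontogeorgis construction. Everything afterwards---separating the monomials and cutting down the exponent ranges---is routine, handled by Lemma~\ref{gradedring}, the distinct-valuation property, and the inequalities $\bar m_i>m$ for $i>s$.
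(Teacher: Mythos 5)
Your inclusion $\supseteq$ and the exclusion of indices $i>s$ (once nonnegativity of the exponents is known) are fine, but the heart of $\subseteq$ is exactly the step you defer, and your proposed repair does not close it. Expanding $f\in L(mP)$ in the basis $\{\bar f_1^{\ell_1}\cdots\bar f_{h-1}^{\ell_{h-1}}:0\leq\ell_i<p^{n_i}\}$ produces coefficients in $k(x)$ (Laurent series, if you work in the completion), and rewriting $\bar f_i^{p^{n_i}}$ via \eqref{defeqi} never acts on these coefficients at all --- the exponents in the basis expansion are already reduced --- so the substitution process you describe does not yield the claimed finite expansion with constant coefficients. What is really needed is that the reduced monomials form a $k[x]$-module basis of the ring $\bigcup_m L(mP)$ of functions regular outside $P$, and your proposed ``equivalent'' formulation, that each $\bar f_i$ is integral over $k[x]$ and regular away from $P$, is strictly weaker: the integral closure of $k[x]$ in $F$ need not be spanned by monomials in a chosen set of integral generators (compare $y/x$ in the integral closure of $k[x,y]/(y^2-x^3)$). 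To make your route work one needs a genuine extra input, e.g.\ that each step $F_{i+1}=F_i(\bar f_i)$ has unit different (the derivative of the minimal polynomial \eqref{defeqi} is the constant $a_0^{(i)}\in k^*$), so that $\bar f_i$ generates the integral closure locally at every finite place. A second, smaller gap: your claim that $\ell_0<0$ forces a pole above $x=0$ is asserted, not proved --- the $\bar f_i$ may vanish at points above $x=0$ and could a priori compensate the negative power of $x$ --- and note that the proof of Lemma~\ref{gradedring} only bounds the valuation at $P$, so for a monomial with $\ell_0<0$ its ``membership in $L(mP)$'' is precisely what is at issue, not something the lemma hands you.

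For comparison: the paper does not prove this proposition at all; it is quoted from \cite{1901.08446}, and the remark following it addresses the $a_0\geq 0$ condition by a different and cleaner mechanism which bypasses both of your problematic steps. Since the Weierstrass semigroup $H(P)$ is generated under addition by $|G_0|,\bar m_1,\dots,\bar m_{h-1}$, every pole order $h\in H(P)$ with $h\leq m$ is realized by a monomial $x^{a_0}\bar f_1^{a_1}\cdots\bar f_s^{a_s}$ with nonnegative reduced exponents; these monomials are linearly independent because their valuations are pairwise distinct by \cite[lem.~2]{MR4194180}, and their number equals $\dim_k L(mP)=\#\{h\in H(P):h\leq m\}$, so they already span $L(mP)$. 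This triangularity-plus-dimension-count argument replaces your normal-form reduction and the pole-at-zero analysis entirely; if you want to salvage your expansion-based approach, the unit-different observation above is the missing lemma you would have to prove first.
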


\begin{remark}

The condition $a_0 \geq 0$ does not appear in the original result in the cited publication. It was an unintended omission. One can prove that if an element in $L(mP)$ has a negative exponent in $x$ then will have a pole different than $P$. In fact even less is needed: By definition, the vector space $L(mP)$ (assume without loss of generality that $m>m_{h-1}$ ) is associated with the elements of the Weierstrass semigroup $H(P)$ by taking valuations at $P$. Since the Weierstrass semigroup $H(P)$ is generated by the elements $|\Gal{(F,k\left((x)\right))}|,\bar{m}_1,\ldots,\bar{m_{h-1}}$ under addition, one gets that the elements of $L(mP)$ is generated by the elements $x,\bar{f}_1,\ldots,\bar{f}_{h-1}$, and their products without involving inverses.
\end{remark}

 Eq. (\ref{defeqi}) vanishes at $\bar{f}_i$, yielding the equality:
\begin{equation}\label{eqoffi}
\bar{f}_i^{p^{n_i}}+a^{(i)}_{n_i-1}\bar{f}_i^{p^{n_i-1}}+\dots+a^{(i)}_{0} \bar{f}_i= D_i.
\end{equation}
The information of the successive extensions is encoded in the coefficients $a^{(i)}_{j}$ of the additive left part and in the constant terms $D_i\in F_{i}$.

Taking valuations on both sides gives  
  that the pole divisor of
$D_{i}$ is $p^{n_{i}}\bar{m}_{i}P$.

Therefore we have two descriptions for $D_i$. On the one hand, $D_i\in L(p^{n_{i}}\bar{m}_{i}P)$. On the other, $D_i\in F_i=k\left((x)\right)(\bar{f}_1,\dots,\bar{f}_{i-1}   )$. We use both descriptions to express $D_i$ as:
\begin{equation}
\label{Ddef}
D_{i}
=
\sum_{ \stackrel{\ell_0\in \mathbb{N}}{ 0\leq\ell_j<p^{n_j}\text{ for }j=1,\ldots,i-1}}
 \gamma_{\ell_0,\dots,\ell_{i-1}}^{(i)}x^{\ell_0}\bar{f}_1^{\ell_1}\cdots \bar{f}_{i-1}^{\ell_{i-1}}.
\end{equation}

 Namely using $D_{i}\in F_i$ allows one to write: 
 \[
 D_i=\sum f(x)\bar{f}_1^{\ell_1}\cdots \bar{f}_{i-1}^{\ell_{i-1}},
 \]
 where $f(x)\in k\left((x)\right)$. Expanding $f$ as a formal sum while recalling that $D_i$ has a single pole at $P$, combined with lemma \ref{gradedring} and proposition \ref{module} yields the desired expression.

We recall the following lemma from \cite[lem. 3]{MR4194180} which also appears as lemma 3 in \cite{Madden78}.

\begin{lemma}\label{mainlem}
Let $F=F_{h}$ as before with generators $x,$ $\bar{f}_i$, $i=1,\dots,h-1$ and associated minimal polynomials $A_i$ as in equation (\ref{defeqi}):
\[
A_i(X)=X^{p^{n_i}}+a_{n_i-1}^{(k)} X^{p^{n_i-1}}+\dots+a_0^{(k)} X- D_i,
\]
where  $D_i$ is given  in equation (\ref{Ddef}),

\[
D_{i}
=
\sum_{ \stackrel{\ell_0\in \mathbb{N}}{ 0\leq\ell_j<p^{n_j}\text{ for }j=1,\ldots,i-1}}
 \gamma_{\ell_0,\dots,\ell_{i-1}}^{(i)}x^{\ell_0}\bar{f}_1^{\ell_1}\cdots \bar{f}_{i-1}^{\ell_{i-1}}.
\]

 Then  one of the monomials $x^{\ell_0}\bar{f}_1^{\ell_1}\cdots \bar{f}_{i-1}^{\ell_{i-1}}$ has also pole divisor
$p^{n_i}\bar{m}_i P$  and this holds for all $i=1,\dots,h-1$.
\end{lemma}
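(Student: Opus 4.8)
The plan is to read off the pole divisor of $D_i$ from the defining equation, and then to locate the monomial that carries this pole by exploiting that the reduced monomials in the expansion (\ref{Ddef}) have pairwise distinct valuations, so that the ultrametric inequality is strict. First I would recall that taking valuations on both sides of (\ref{eqoffi}) forces $v(D_i)=-p^{n_i}\bar{m}_i$, so the pole divisor of $D_i$ is exactly $p^{n_i}\bar{m}_iP$ and $P$ is its only pole. This is the ``target'' valuation that one of the monomials must reproduce.

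The crucial input is \cite[lem. 2]{MR4194180}: any two distinct monomials $x^{\ell_0}\bar{f}_1^{\ell_1}\cdots \bar{f}_{i-1}^{\ell_{i-1}}$ obeying the normalization $0\leq \ell_j<p^{n_j}$ have different valuations at $P$. This is precisely the reason the exponents in (\ref{Ddef}) were reduced below $p^{n_j}$ using the defining equations (\ref{defeqi}); without that reduction two exponent vectors could collide and the argument would break. With distinctness in hand, the non-archimedean (strict) triangle inequality applied to the finite sum (\ref{Ddef}) forces $v(D_i)$ to equal the smallest valuation occurring among the summands, and this minimum is attained by a \emph{single} monomial, namely the term $\min D_i$ produced in Lemma \ref{gradedring}. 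No cancellation of this leading term is possible, again because the valuations are distinct.

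Combining the two steps, that unique minimal monomial satisfies $v\big(x^{\ell_0}\bar{f}_1^{\ell_1}\cdots \bar{f}_{i-1}^{\ell_{i-1}}\big)=v(D_i)=-p^{n_i}\bar{m}_i$. Since each of the generators $x,\bar{f}_1,\ldots,\bar{f}_{i-1}$ has its only pole at $P$, so does any monomial in them, and hence the pole divisor of this monomial is exactly $p^{n_i}\bar{m}_iP$, which is the assertion. As the index $i\in\{1,\ldots,h-1\}$ played no special role, the same argument yields the claim for every $i$.

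I do not expect a genuine obstacle here: the statement is essentially a consequence of the distinct-valuation lemma \cite[lem. 2]{MR4194180} together with the strict triangle inequality. The only points demanding care are making sure the expansion (\ref{Ddef}) is in fully reduced form so that the distinct-valuation lemma is applicable, and confirming that the monomial realizing the minimum has no pole away from $P$ — both of which are immediate from the setup recorded above.
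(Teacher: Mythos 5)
Your argument is correct, and it is worth noting that the paper itself does not prove Lemma \ref{mainlem} at all: it is imported verbatim from \cite[lem.\ 3]{MR4194180} (and \cite{Madden78}). What you have done is reconstitute that external proof from ingredients the paper already assembles, which makes the statement self-contained relative to this paper. Your two steps are exactly right: first, taking valuations in (\ref{eqoffi}) pins down $v(D_i)=-p^{n_i}\bar{m}_i$ (the leading term $\bar{f}_i^{p^{n_i}}$ has strictly smaller valuation than every other term on the left, since the $a^{(i)}_j$ lie in $k$, so the strict triangle inequality applies); second, the distinct-valuation lemma \cite[lem.\ 2]{MR4194180} for reduced monomials forces the minimum over the summands of (\ref{Ddef}) to be attained by a unique monomial, so no cancellation can occur and $v(\min D_i)=v(D_i)$. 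Since $x,\bar{f}_1,\dots,\bar{f}_{i-1}$ have their only pole at $P$ and the exponents are nonnegative, the valuation of that monomial translates directly into the pole divisor $p^{n_i}\bar{m}_iP$. You also correctly flag the two hypotheses that make the machinery legal: the exponents in (\ref{Ddef}) must be in reduced form ($0\leq\ell_j<p^{n_j}$) for the distinct-valuation lemma to apply, and the sum is finite because $D_i\in L(p^{n_i}\bar{m}_iP)$ bounds $\ell_0$. One cosmetic remark: Lemma \ref{gradedring} does not formally ``produce'' the term $\min D_i$ in its statement (it is named only inside that lemma's proof), so strictly speaking you should define the minimal-valuation monomial yourself before invoking uniqueness; this is a wording issue, not a gap.
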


For an element $\sigma\in \Gal{(F/k\left((x)\right))}$ we denote the action on a field generator $\bar{f_i}$ by
\begin{equation}\label{actionofgroup}
\sigma(\bar{f}_i)  =\bar{f}_i+\bar{C_i}(\sigma),
\end{equation}
where $\bar{C_i}(\sigma)$ has valuation smaller than $\bar{m}_i$, therefore $\bar{C_i}(\sigma)\in L\left(\bar{m}_{i-1}P\right)$.
 Since only one place is allowed to ramify, it is known (see \cite[prop. 27]{Karanikolopoulos2013}) that if $\sigma\mid_{F_i}=\mathrm{id}$ and $\sigma\mid_{F_{i+1}}\neq \mathrm{id}$ then
\[
  \sigma(\bar{f}_j)=\bar{f}_ j\text{ for }j<i\text{ and }\sigma(\bar{f}_i)=\bar{f}_i+c,\, c\in k^*.
\]


\section{The generalization of the Hasse-Arf theorem}
In this section we give a generalization of the Hasse-Arf theorem. Namely we prove that for an HKG curve, the jumps of the upper ramification filtration are integers if and only if a certain condition holds for the constant term of the minimal polynomial of each field generator.

Consider an HKG curve given as before in the form of consecutive extensions:
\[
	F=F_h=F_{h-1}(\bar{f}_{h-1})>F_{h-1}>\dots>F_2=F_1(\bar{f}_1)>k((x)).
\]
The field generators satisfy equations given in (\ref{eqoffi}).
The pole divisor of each $\bar{f}_i$ is $m_iP$ and $m_i=p^{n_{i+1}+\dots+n_{h-1}}b_i$ and $b_i$ are the jumps of the lower ramification filtration.

For the minimal polynomial of $\bar{f}_i$ we consider the constant term $D_i$. Denote its minimal term as:
\[
\min D_i=a_ix^{\nu_{i,0}}\dots \bar{f}_{i-1}^{\nu_{i,i-1}},\]
$a_i$ is in $k^*$. As explained before it holds that $\nu_{i,j}\geq 0$, $\nu_{i,j}<p^{n_j}$ for $j>0$ and all $i.$ The following is the main theorem of this paper:

\begin{theorem}\label{hassearfgeneralization}
	The jumps in the upper ramification filtration of $G$ are integers if and only if the following condition holds:
	$$\min{D_i}=a_ix^{\nu_{i,0}}\bar{f}_{i-1}$$
	for $i>1.$
\end{theorem}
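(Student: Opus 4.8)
The plan is to translate both sides of the claimed equivalence into a single arithmetic condition on the lower jumps $b_1<\dots<b_{h-1}$ and the integers $n_1,\dots,n_{h-1}$, namely the divisibility
\[
p^{n_1+\dots+n_{i-1}}\mid (b_i-b_{i-1}),\qquad i=2,\dots,h-1,
\]
and then to show separately that each side of the theorem is equivalent to it. First I would treat the upper numbering. Using that $G_0=G_1=G_{b_1}$ and $[G_{b_j}:G_{b_{j+1}}]=p^{n_j}$, the Herbrand function $\phi$ is piecewise linear with $[G_0:G_t]=p^{n_1+\dots+n_{j-1}}$ on $(b_{j-1},b_j]$ (with $b_0=0$), so that
\[
\phi(b_i)=\sum_{j=1}^{i}\frac{b_j-b_{j-1}}{p^{n_1+\dots+n_{j-1}}},\qquad \phi(b_i)-\phi(b_{i-1})=\frac{b_i-b_{i-1}}{p^{n_1+\dots+n_{i-1}}}.
\]
Since the jumps of the upper filtration occur exactly at the $\phi(b_i)$ and $\phi(b_1)=b_1$ is already an integer, a one-line induction shows that all of them are integers if and only if each increment is, i.e.\ precisely the displayed divisibility.

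Next I would record the valuation bookkeeping. Because $P_\infty$ is totally and wildly ramified with $G=G_1$, the ramification index is $|G|=p^{n_1+\dots+n_{h-1}}$, so $v(x)=-p^{n_1+\dots+n_{h-1}}$, while $v(\bar f_j)=-\bar m_j=-p^{n_{j+1}+\dots+n_{h-1}}b_j$ and $v(D_i)=-p^{n_i}\bar m_i=-p^{n_i+\dots+n_{h-1}}b_i$. By Lemma~\ref{mainlem} this valuation is attained by an admissible monomial of $D_i$, which is exactly $\min D_i$, and by \cite[lem. 2]{MR4194180} distinct monomials $x^{\ell_0}\bar f_1^{\ell_1}\cdots\bar f_{i-1}^{\ell_{i-1}}$ with $0\le \ell_j<p^{n_j}$ have distinct valuations. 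Hence $\min D_i$ is the \emph{unique} admissible monomial whose valuation equals $v(D_i)$. Writing $\min D_i=a_i x^{\nu_{i,0}}\bar f_1^{\nu_{i,1}}\cdots\bar f_{i-1}^{\nu_{i,i-1}}$, equating valuations and dividing through by $p^{n_i+\dots+n_{h-1}}$ converts $v(\min D_i)=v(D_i)$ into the integer equation
\[
b_i=\nu_{i,0}\,p^{n_1+\dots+n_{i-1}}+\sum_{j=1}^{i-1}\nu_{i,j}\,p^{n_{j+1}+\dots+n_{i-1}}\,b_j,\qquad 0\le \nu_{i,j}<p^{n_j},
\]
which by the uniqueness above has exactly one solution in the $\nu_{i,j}$.

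Finally I would match the two descriptions. If $\min D_i=a_i x^{\nu_{i,0}}\bar f_{i-1}$, i.e.\ $\nu_{i,i-1}=1$ and $\nu_{i,j}=0$ for $1\le j\le i-2$, the last display collapses to $b_i-b_{i-1}=\nu_{i,0}\,p^{n_1+\dots+n_{i-1}}$, which is the divisibility condition. Conversely, if $p^{n_1+\dots+n_{i-1}}\mid(b_i-b_{i-1})$, then $\nu_{i,0}=(b_i-b_{i-1})/p^{n_1+\dots+n_{i-1}}\ge 0$, $\nu_{i,i-1}=1$, and all other $\nu_{i,j}=0$ is an admissible solution of the same equation; by uniqueness it is the only one, so $\min D_i=a_i x^{\nu_{i,0}}\bar f_{i-1}$. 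Combined with the first paragraph, this proves the theorem. The main point to get right is the identification of $\min D_i$ with the unique admissible monomial of valuation $v(D_i)$: one must ensure the distinctness lemma applies with the unbounded exponent $\ell_0$ and that $v(x)$ carries the factor $|G|$ rather than $1$. Once these are in place, the equivalence is just the reading-off of a single mixed-radix representation, and no coprimality assumption on the $b_j$ is needed.
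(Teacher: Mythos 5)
Your proof is correct, and its outer skeleton coincides with the paper's: the same valuation identity $p^{n_i}\bar{m}_i=\nu_{i,0}|G|+\sum_{j=1}^{i-1}\nu_{i,j}\bar{m}_j$ leading to eq.~(\ref{eqofbi}), and the same Herbrand-function bookkeeping reducing integrality of the upper jumps to the divisibilities $p^{n_1+\dots+n_{i-1}}\mid (b_i-b_{i-1})$. Where you genuinely diverge is the decisive middle step, divisibility $\Leftrightarrow$ $\min D_i=a_ix^{\nu_{i,0}}\bar{f}_{i-1}$. The paper proves the hard direction by hand from eq.~(\ref{eqofbi}): it reduces modulo $p^{n_{i-1}},p^{n_{i-2}},\dots$ in turn, invoking $(b_{j},p)=1$ at each stage to peel off $\nu_{i,i-1}=1$, then $\nu_{i,i-2}=0,\dots,\nu_{i,1}=0$. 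You instead exhibit the tuple $(\nu_{i,0},0,\dots,0,1)$ with $\nu_{i,0}=(b_i-b_{i-1})/p^{n_1+\dots+n_{i-1}}$ as an admissible solution of the valuation equation and appeal to the injectivity of the valuation map on admissible monomials (\cite[lem.~2]{MR4194180}, together with Lemma~\ref{mainlem} for attainment) to conclude it must be the exponent tuple of $\min D_i$. This buys a shorter and more symmetric argument --- both directions of the middle equivalence fall out of a single uniqueness-of-representation statement --- at the cost of outsourcing the arithmetic to the cited lemma. One caveat: your closing claim that ``no coprimality assumption on the $b_j$ is needed'' is misleading. The injectivity you invoke fails without it: if, say, $b_1=p^{n_1}$, then $v(x)=-p^{n_1+\dots+n_{h-1}}=v(\bar{f}_1)$, so the tuples $(1,0)$ and $(0,1)$ share a valuation and uniqueness of the mixed-radix representation collapses. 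The coprimality of the $b_j$ to $p$, which the paper uses explicitly at each peeling step, is exactly what underwrites \cite[lem.~2]{MR4194180}; since it does hold in the HKG setting, your proof stands, but the dependency is merely relocated into the cited lemma, not eliminated.
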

In other words, for $i=2,\dots,h-1$ it holds that $\nu_{i,j}=0$ for $j=1,\dots,i-2$ and $\nu_{i,i-1}=1$.

\begin{proof}Before proving the theorem some preparations are in order. The constant term $D_i$ has valuation equal to the valuation of $\bar{f}_i^{p^{n_i}}$. On the other hand, $v(D_i)=v(\min D_i)=v(ax^{\nu_{i,0}}\dots \bar{f}_{i-1}^{\nu_{i,i-1}})$ for $i>1.$

The two equalities give:
\[
	p^{n_i}\bar{m}_i=\nu_{i,0}|G|+\nu_{i,1}\bar{m}_1+\ldots+\nu_{i,i-1}\bar{m}_{i-1}.
\]
Since $\bar{m}_i=p^{n_{i+1}+\ldots+n_{h-1}}b_i$ for each $i$ and $|G|=p^{n_1+\ldots+n_{h-1}}$ we have:
\[
	p^{n_i}p^{n_{i+1}+\ldots+n_{h-1}}b_i
	=
	\nu_{i,0}p^{n_1+\ldots+n_{h-1}}+\nu_{i,1}p^{n_2+\ldots+n_{h-1}}b_1+\ldots+\nu_{i,i-1}p^{n_{i}+\ldots+n_{h-1}}b_{i-1}
\]
Cancel the common part, i.e. $p^{n_i+\ldots+n_{h-1}}$, from both equations to get:
\[
	b_i=\nu_{i,0}p^{n_1+\ldots+n_{i-1}}+\nu_{i,1}p^{n_2+\ldots+n_{i-1}}b_1+\ldots+\nu_{i,i-1}b_{i-1}.
\]
Substract $b_{i-1}$ from both sides and we have:
\begin{equation}\label{eqofbi}
b_i-b_{i-1}=\nu_{i,0}p^{n_1+\ldots+n_{i-1}}+\nu_{i,1}p^{n_2+\ldots+n_{i-1}}b_1+\ldots+(\nu_{i,i-1}-1)b_{i-1}.
\end{equation}
This holds for $i=2,\ldots,h-1$. Also remember that all $\nu_{i,j}$ are nonnegative and also $\nu_{i,j}<p^{n_j}$ for $j=1,\ldots,i-1$ and all $i.$

Suppose first that the upper jumps are integers. This means that $\phi(b_i)\in \Z$ for each $i$, where:
\[
	\phi(u)=\int_{0}^u\frac{dt}{[G_0:G_t]}.
\]
We will prove the argument directly but we exhibit first the process for the first two terms for convenience. For the first term $D_1$ we don't have anything to show. For $D_2$ we have that $\phi(b_2)=b_1+\frac{b_2-b_1}{p^{n_1}}.$

Since $\phi(b_2)$ is integer we get that $p^{n_1}$ divides $b_2-b_1.$ Therefore $p^{n_1}$ will divide the right hand side of equation \ref{eqofbi} for $i=2$ giving:
\[
	p^{n_1}\mid \nu_{2,0}p^{n_1}+(\nu_{2,1}-1)b_{1}.
\]
Since $0\leq \nu_{2,1}<p^{n_1}$ and $(b_1,p)=1$, we get that $\nu_{2,1}$ must be equal to $1$. Therefore $\min D_2=a_2x^{\nu_{2,0}}\bar{f}_2.$

For the general case we have that $\phi(b_i)\in \Z$ for $i=1,\ldots,h-1$ giving that 
\begin{align*}\label{hassearfdivisibility}
p^{n_1}&\mid b_2-b_1\\
&\vdots\\
p^{n_1+\ldots+n_i}&\mid b_{i+1}-b_i\tag{*}\\
&\vdots\\
 p^{n_1+\ldots+n_{h-2}}&\mid b_{h-1}-b_{h-2}.
\end{align*} 

We will show that $\mid D_i=a_ix^{\nu_{i,0}}\bar{f}_{i-1}$. Since $p^{n_1+\ldots+n_{i-1}}$ divides $b_i-b_{i-1}$, it divides the right hand side of eq. \ref{eqofbi}, which is: 
$$\nu_{i,0}p^{n_1+\ldots+n_{i-1}}+\nu_{i,1}p^{n_2+\ldots+n_{i-1}}b_1+\ldots+(\nu_{i,i-1}-1)b_{i-1}.$$
 The first term is already a multiple of $p^{n_1+\ldots+n_{i-1}}$ therefore we arrive at:
\begin{equation}\label{eqdivisibilityofbi}
p^{n_1+\ldots+n_{i-1}}\mid \nu_{i,1}p^{n_2+\ldots+n_{i-1}}b_1+
	\ldots+\nu_{i,i-2}p^{n_{i-1}}b_{i-2}+(\nu_{i,i-1}-1)b_{i-1}.
\end{equation}
Recall that $0\leq \nu_{i,j}<p^{n_j}$ for $j>0$. First we show that $\nu_{i,i-1}$ is $1.$ For this write: 
\[
	\tau p^{n_1+\ldots+n_{i-1}}= \nu_{i,1}p^{n_2+\ldots+n_{i-1}}b_1+\ldots+\nu_{i,i-2}p^{n_{i-1}}b_{i-2}+(\nu_{i,i-1}-1)b_{i-1}.
\]
to get to 
\[
	(\nu_{i,i-1}-1)b_{i-1}=\tau p^{n_1+\ldots+n_{i-1}}-( \nu_{i,1}p^{n_2+\ldots+n_{i-1}}b_1+\ldots+\nu_{i,i-2}p^{n_{i-1}}b_{i-2}).
\]
The right hand side is a multiple of $p^{n_{i-1}}$ whereas $\nu_{i,i-1}$ is less than $p^{n_{i-1}}$ and $b_{i-1}$ is prime to $p$. From this, $\nu_{i,i-1}=1.$  

Going back, we rewrite eq. \ref{eqdivisibilityofbi} and canceling $p^{n_{i-1}}$ from both sides to get that:
\[
	p^{n_1+\ldots+n_{i-2}}\mid \nu_{i,1}p^{n_2+\ldots+n_{i-2}}b_1+
	\ldots+\nu_{i,i-3}p^{n_{i-2}}b_{i-3}+\nu_{i,i-2}b_{i-2}.
\]
Then $p^{n_{i-2}}$ divides $\nu_{i,i-2}b_{i-2}$ and the same argument now gives that $\nu_{i,i-2}=0.$

Repeating the process will give that $p^{n_1+\ldots+n_{i-2}}\mid \nu_{i,1}p^{n_2+\ldots+n_{i-2}}b_1$ and since $(b_1,p)=1$ and $\nu_{i,1}<p^{n_1}$ we get that $\nu_{i,1}=0$. Therefore $\nu_{i,1}=\ldots=\nu_{i,i-2}=0$ and $\nu_{i,i-1}=1$ yielding that
\[
	\min D_i=a_ix^{\nu_{i,0}}\bar{f}_{i-1},
\]
for all $i=2,\ldots,h-1.$

Now we proceed to show that if $\min D_i=a_ix^{\nu_{i,0}}\bar{f}_{i-1}$ for $i=2,\ldots,h-1$ then the jumps of the upper ramification filtration of $G$ are integers. For the jumps $b_i$ of the lower ramification filtration, eq. \ref{eqofbi} becomes:
\begin{equation}\label{neweqofbi}
b_i-b_{i-1}=\nu_{i,0}p^{n_1+\ldots+n_{i-1}}.
\end{equation}
for $i=2,\ldots,h-1.$

Then 
\[
	\phi(b_i)=b_1+\frac{b_2-b_1}{p^{n_1}}+\ldots+\frac{b_i-b_{i-1}}{p^{n_1+\ldots+n_{i-1}}}
\]
and the result is imminent by (\ref{neweqofbi}).
\end{proof}

 \bibliographystyle{plain}
\bibliography{C:/Users/kharg/Desktop/tsoukupdwindows.bib}

\begin{thebibliography}{1}

\bibitem{Karanikolopoulos2013}
Sotiris Karanikolopoulos and Aristides Kontogeorgis.
\newblock Automorphisms of curves and {W}eierstrass semigroups for
  {H}arbater-{K}atz-{G}abber covers.
\newblock {\em Trans. Amer. Math. Soc.}, 371(9):6377--6402, 2019.

\bibitem{1901.08446}
Aristides Kontogeorgis and Ioannis Tsouknidas.
\newblock A cohomological treatise of {HKG}-covers with applications to the
  {N}ottingham group.
\newblock {\em J. Algebra}, 555:325--345, 2020.

\bibitem{MR4194180}
Aristides Kontogeorgis and Ioannis Tsouknidas.
\newblock A generating set for the canonical ideal of {HKG}-curves.
\newblock {\em Res. Number Theory}, 7(1):Paper No. 4, 2021.

\bibitem{Madden78}
Daniel~J. Madden.
\newblock Arithmetic in generalized {A}rtin-{S}chreier extensions of {$k(x)$}.
\newblock {\em J. Number Theory}, 10(3):303--323, 1978.

\bibitem{SeL}
Jean-Pierre Serre.
\newblock {\em Local fields}.
\newblock Springer-Verlag, New York, 1979.
\newblock Translated from the French by Marvin Jay Greenberg.

\bibitem{Stichtenothv2009}
Henning Stichtenoth.
\newblock {\em Algebraic function fields and codes}, volume 254 of {\em
  Graduate Texts in Mathematics}.
\newblock Springer-Verlag, Berlin, second edition, 2009.

\end{thebibliography}

\end{document}